\definecolor{webgreen}{rgb}{0,.5,0}
\definecolor{webbrown}{rgb}{.6,0,0}
\def\Enn{\mathbb{N}}
\def\suchthat{\, : \,}
\newenvironment{smallarray}[1]
{\null\,\vcenter\bgroup\scriptsize
\arraycolsep=.13885em
\hbox\bgroup$\array{@{}#1@{}}}
{\endarray$\egroup\egroup\,\null}
\begin{document}

\theoremstyle{plain}
\newtheorem{theorem}{Theorem}
\newtheorem{corollary}[theorem]{Corollary}
\newtheorem{lemma}[theorem]{Lemma}
\newtheorem{proposition}[theorem]{Proposition}

\newtheorem{definition}[theorem]{Definition}
\theoremstyle{definition}
\newtheorem{example}[theorem]{Example}
\newtheorem{conjecture}[theorem]{Conjecture}

\theoremstyle{remark}
\newtheorem{remark}[theorem]{Remark}

\title{A Dombi Counterexample with Positive Lower Density}

\author{
Jeffrey Shallit\\
School of Computer Science\\
University of Waterloo \\
Waterloo, ON  N2L 3G1 \\
Canada\\
\href{mailto:shallit@uwaterloo.ca}{\tt shallit@uwaterloo.ca} }

\maketitle

\begin{abstract}
Let $r(k,A,n)$ denote the number of representations of $n$ 
as a sum of $k$ elements of a set $A \subseteq \Enn$.  
In 2002, Dombi conjectured that if $A$ is co-infinite, then
the sequence $(r(k,A,n))_{n \geq 0}$ cannot be strictly increasing.
Using tools from automata theory and logic,
we give an explicit
counterexample where $\Enn \setminus A$ has positive lower density.
\end{abstract}

\section{Introduction}
Let $\Enn = \{0,1,\ldots \}$ be the natural numbers, and let
$A \subseteq \Enn$.  Define
$r(k,A,n)$ to be the number of $k$-tuples of elements of $A$
that sum to $n$.  Dombi \cite{Dombi:2002} conjectured that
there is no infinite set $F$ such that $r(3,\Enn\setminus F,n)$ is
strictly increasing.   Recently Bell et al.~\cite{Bell&Shallit:2023}
found a counterexample to this conjecture.  However, the 
$F$ of their example is quite sparse; it has upper density $0$.
In this note we give a simple explicit example of an $F$ such that
$r(3,\Enn\setminus F,n)$ is strictly increasing and
$F$ has positive lower density.   The novelty to our approach
is the use of tools from automata theory and logic.

\section{The example}
Let $F = \{3,12,13,14,15,48,49,50, \ldots \}$ be the set of natural
numbers whose base-$2$ expansion is of even length and
begin with $11$.   This is an example of an {\it automatic set\/} \cite{Allouche&Shallit:2003};
that is, there is a finite automaton accepting exactly the 
base-$2$ expansions of the numbers of $F$.  It is depicted in
Figure~\ref{figf}.  Here $0$ is the initial state, and $3$ is the only accepting
state.  The input is a binary representation of $n$, starting with the most significant
digit.
\begin{figure}[H]
\begin{center}
\includegraphics[width=5.5in]{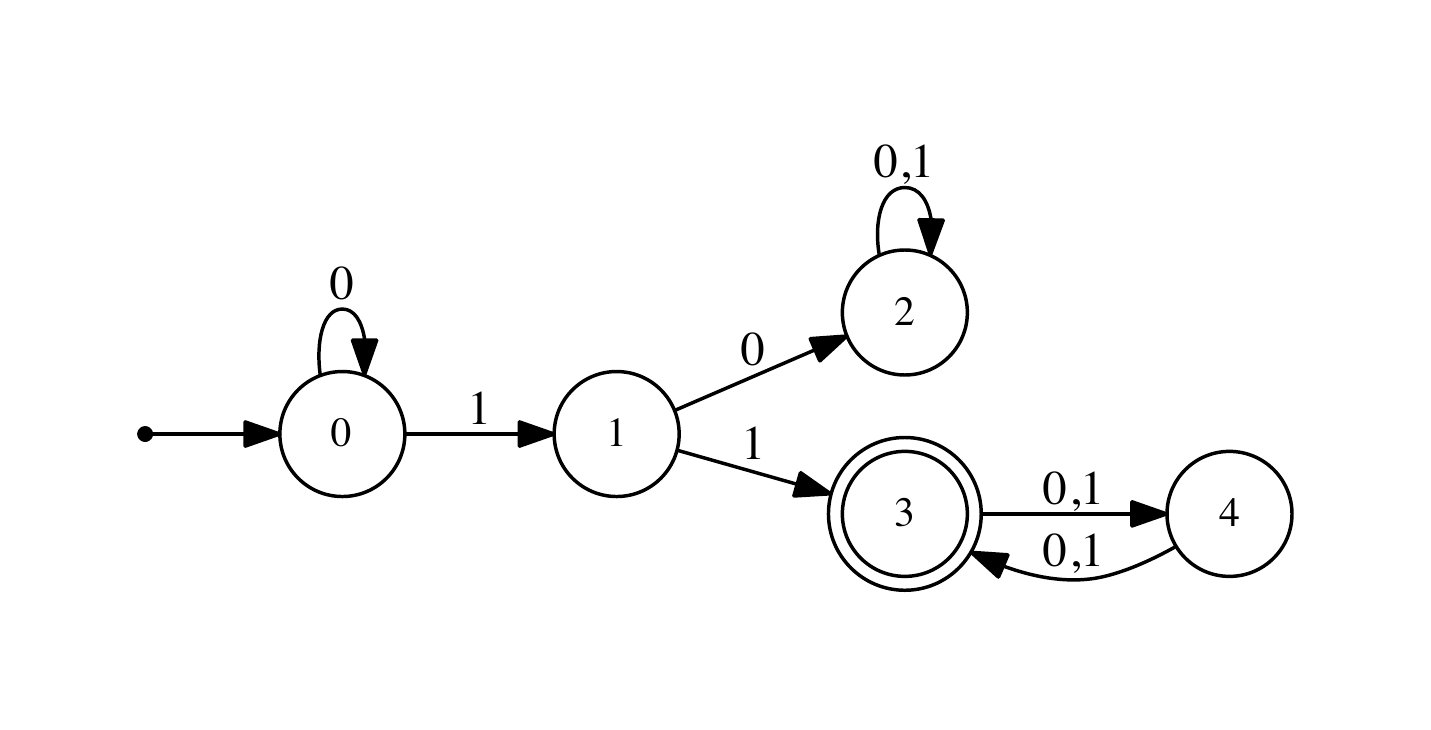}
\end{center}
\vskip -.5in
\caption{Automaton for $F$.}
\label{figf}
\end{figure}

For a set $X \subseteq \Enn$,
define $D_X (n) = {1\over n}| \{ X\, \cap \,
\{0,1,\ldots, n-1 \} \} | $.
Recall that the {\it lower density\/} of $X$
is defined to be $\liminf_{n \rightarrow \infty} D_X(n)$
and the {\it upper density\/} is
$\limsup_{n \rightarrow \infty} D_X(n)$.

\begin{proposition}
The lower density of $F$ is $1/9$ and the upper density is $1/3$.
\end{proposition}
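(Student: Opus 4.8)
The plan is to translate the automaton description of $F$ into an explicit description as a disjoint union of intervals, to compute the counting function $n \mapsto |F \cap \{0,1,\ldots,n-1\}|$ in closed form on each block, and then to read off the two densities directly from the definitions of $\liminf$ and $\limsup$.

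First I would note that $n \in F$ precisely when $n$ has a base-$2$ expansion of some even length $2j$ (with $j \ge 1$) that begins with $11$; equivalently, $3 \cdot 4^{j-1} \le n \le 4^j - 1$. Hence
\[
  F \;=\; \bigcup_{j \ge 1}\; \{\, m \in \Enn \suchthat 3\cdot 4^{j-1} \le m \le 4^j - 1 \,\},
\]
a union of pairwise disjoint blocks, the $j$-th of which contains exactly $4^j - 3\cdot 4^{j-1} = 4^{j-1}$ integers. Summing a finite geometric series gives, for every $j \ge 1$,
\[
  c_j \;:=\; \bigl|\, F \cap \{0,1,\ldots,4^j-1\} \,\bigr| \;=\; \sum_{i=1}^{j} 4^{i-1} \;=\; \frac{4^j - 1}{3}.
\]

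Next I would evaluate $D_F(n)$ for $n$ in the range $4^j \le n \le 4^{j+1}$ by splitting into two cases according to whether $n$ has reached the $(j+1)$-st block. If $4^j \le n \le 3\cdot 4^j$, then no element of the $(j+1)$-st block lies below $n$, so $|F \cap \{0,\ldots,n-1\}| = c_j$ and $D_F(n) = c_j/n$, a decreasing function of $n$ on this range, running from $\tfrac13(1 - 4^{-j})$ down to $\tfrac19(1 - 4^{-j})$. If $3\cdot 4^j \le n \le 4^{j+1}$, then $F \cap \{0,\ldots,n-1\}$ consists of the first $j$ blocks together with $\{3\cdot 4^j, \ldots, n-1\}$, so $|F \cap \{0,\ldots,n-1\}| = c_j + (n - 3\cdot 4^j)$ and $D_F(n) = 1 - (3\cdot 4^j - c_j)/n$, an increasing function of $n$ on this range, running from $\tfrac19(1 - 4^{-j})$ up to $\tfrac13(1 - 4^{-(j+1)})$. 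Thus on each interval $4^j \le n \le 4^{j+1}$ the minimum of $D_F$ is $\tfrac19(1 - 4^{-j})$, attained at $n = 3\cdot 4^j$, and the maximum is $\tfrac13(1 - 4^{-(j+1)})$, attained at $n = 4^{j+1}$.

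Finally, since the sequences $\tfrac19(1 - 4^{-j})$ and $\tfrac13(1 - 4^{-j})$ increase to $1/9$ and $1/3$ respectively, the case analysis shows that $\tfrac19(1 - 4^{-j}) \le D_F(n) < \tfrac13$ for all $n \ge 4^j$, while $D_F(3\cdot 4^j) \to 1/9$ and $D_F(4^j) \to 1/3$ as $j \to \infty$; combining these yields $\liminf_{n\to\infty} D_F(n) = 1/9$ and $\limsup_{n\to\infty} D_F(n) = 1/3$. I expect the only points requiring care to be the off-by-one bookkeeping in the interval endpoints (the distinction between ``$< n$'' and ``$\le n$''), and the verification that the per-block extrema found above genuinely control the global $\liminf$ and $\limsup$ — which is exactly what the monotonicity of the two sequences provides.
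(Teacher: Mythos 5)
Your proof is correct and follows essentially the same route as the paper's: identify $F$ as the disjoint union of the blocks $[3\cdot 4^{j-1},\,4^j-1]$ and evaluate $D_F$ along the subsequences $n=3\cdot 4^j$ and $n=4^j$. You additionally supply the monotonicity argument (that $D_F$ decreases and then increases across each block), which justifies that these subsequences genuinely realize the $\liminf$ and $\limsup$ --- a detail the paper's proof leaves implicit.
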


\begin{proof}
The characteristic sequence of $F$ is
$$ 000\, 1 \, \overbrace{\{0 \cdots 0\}}^8 
\overbrace{\{1 \cdots 1\}}^4
\overbrace{\{0\cdots 0\}}^{32}
\overbrace{\{1\cdots1\}}^{16}
\cdots
\overbrace{\{0\cdots 0\}}^{2\cdot 4^n}
\overbrace{\{1\cdots 1\}}^{4^n}
\cdots . $$
So the lower density of $F$ is 
$$\liminf_{n \rightarrow \infty} D_X (3 \cdot 4^n) = 
{{1 + 4 + 16 + \cdots + 4^{n-1}} \over {3 \cdot 4^n}} =
{{{4^n -1} \over 3} \over {3 \cdot 4^n}} = {1 \over 9} ,$$
and the upper density is 
$$\liminf_{n \rightarrow\infty} D_X(4^n) 
= {{1 + 4 + 16 + \cdots + 4^{n-1}} \over {4^n}} = 
{{{4^n -1} \over 3} \over {4^n}} = {1 \over 3}.$$
\end{proof}

\begin{theorem}
The sequence $r(3,\Enn\setminus F,n)$ is strictly increasing.
\end{theorem}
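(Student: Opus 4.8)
The plan is to pass to generating functions. Set $A = \Enn\setminus F$ and $f(x) = \sum_{n\in F} x^n$, so that $\sum_{n\ge 0} r(3,A,n)\,x^n = a(x)^3$ with $a(x) = \frac{1}{1-x} - f(x)$; the theorem asserts that every coefficient of $(1-x)\,a(x)^3$ is at least $1$. The feature of this particular $F$ that makes the computation tractable is that $F$ is the \emph{disjoint} union of the intervals $[3\cdot 4^{m-1},4^m)$, $m\ge 1$, so that $(1-x)f(x)$ is the very sparse series $g(x):=\sum_{m\ge 1}\bigl(x^{3\cdot 4^{m-1}}-x^{4^m}\bigr)$. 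Writing $a=(1-g)/(1-x)$ gives $(1-x)a^3=(1-g)^3/(1-x)^2=(1-3g+3g^2-g^3)/(1-x)^2$, and since $g/(1-x)^2=f/(1-x)$, $g^2/(1-x)^2=f^2$, and $g^3/(1-x)^2=(1-x)f^3$, extracting the coefficient of $x^n$ gives the exact identity
\[ r(3,A,n)-r(3,A,n-1)=(n+1)-3F(n)+3\,r(2,F,n)-\bigl(r(3,F,n)-r(3,F,n-1)\bigr), \]
valid for all $n\ge 0$ with the conventions $r(3,A,-1)=r(3,F,-1)=0$, where $F(n):=|F\cap\{0,1,\dots,n\}|$. (At $n=0$ both sides equal $1$.) Thus the theorem is equivalent to the right-hand side being $\ge 1$ for every $n$.

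The first, easy, step is the estimate $F(n)\le n/3$, equivalently $(n+1)-3F(n)\ge 1$, which is immediate from the block structure of $F$ (with equality exactly at $n=4^m-1$). After this reduction it remains to prove
\[ r(3,F,n)-r(3,F,n-1)\;\le\;\bigl(n-3F(n)\bigr)+3\,r(2,F,n)\qquad(n\ge 0), \]
whose left-hand side counts the triples from $F$ that first become representable at level $n$. To control it I would classify each ordered triple $(a,b,c)\in F^3$ according to which of the blocks $B_i:=[3\cdot 4^{i-1},4^i)$ its three entries lie in. For $n$ in a fixed range $[4^m,4^{m+1})$ only the blocks $B_1,\dots,B_{m+1}$ can occur, the number of ordered representations of $n$ inside any fixed triple of blocks is a piecewise-linear (``tent-shaped'') function of $n$ that one can write down explicitly, and the same holds for $r(2,F,n)$ and for the exact value of $n-3F(n)$.

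This is where the automata-theoretic point of view is meant to help: because $F$ is $2$-automatic, a finite automaton reading the base-$2$ expansion of $n$ always knows which super-block $[4^m,4^{m+1})$ contains $n$ and the position of $n$ within it, which is exactly the data needed to evaluate the piecewise-linear expressions above. The function $F(n)$ is then a $2$-regular sequence, while $r(2,F,n)$ and $r(3,F,n)-r(3,F,n-1)$ are given by explicit piecewise formulas indexed by the finitely many ``zones'' into which the powers of $4$ partition each super-block; the displayed inequality thereby collapses to a finite, mechanically checkable family of linear inequalities in $n$ and $4^m$ — in effect an emptiness test for a suitable automaton in a numeration system adapted to the powers of $4$.

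The step I expect to be the main obstacle is precisely this last uniform domination: an asymptotic bound is not enough, since one needs $(n-3F(n))+3\,r(2,F,n)$ to beat $r(3,F,n)-r(3,F,n-1)$ for \emph{every} $n$, in every one of the $O(\log n)$ zones, and the worst case is genuinely tight. When $n$ lies just above a power $4^m$, the term $n-3F(n)$ has collapsed to $O(1)$, so the entire burden falls on $3\,r(2,F,n)$, which there is of order $4^m$; at the same time $r(3,F,n)-r(3,F,n-1)$ — which in this range comes entirely from triples with one entry in the large block $B_m$ and two entries among the smaller blocks — is also of order $4^m$, but with a strictly smaller constant (concretely, for $m\ge 2$ one finds $r(3,A,4^m)-r(3,A,4^m-1)=\tfrac38\cdot 4^m+2$). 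Making that margin rigorous for \emph{all} $n$, rather than only for large $n$, is the part that genuinely requires the careful (ideally automated) case analysis, together with a direct check of the finitely many small values not covered by the general estimates.
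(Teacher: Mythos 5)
Your generating-function reduction is correct and is a genuinely different route from the paper's. Writing $g=(1-x)f$ and expanding $(1-g)^3/(1-x)^2$ does give the exact identity $d(n)=(n+1)-3F(n)+3r(2,F,n)-(r(3,F,n)-r(3,F,n-1))$, and the bound $F(n)\le n/3$ follows easily from the block structure of $F$. (The paper takes a different path: it proves the self-similar inequality $d(n)\ge 4d(\lfloor n/4\rfloor)-18$ by exhibiting $d(n)-4d(\lfloor n/4\rfloor)$ as an automatic sequence via an explicit minimized linear representation of rank $16$, and then closes with an induction showing $d(n)>n/5+7$ for $n\ge 87$.)

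However, as you yourself flag, the argument is not complete: everything rests on the domination $r(3,F,n)-r(3,F,n-1)\le (n-3F(n))+3\,r(2,F,n)$, and this is only asserted to be ``mechanically checkable,'' not checked. This is not a routine omission, for two reasons. First, you correctly observe that the inequality is essentially tight near $n=4^m$, where $n-3F(n)=1$ and the two remaining terms are both of order $4^m$ with nearby constants, so no soft or asymptotic estimate will close the gap; the exact worst-case constants must be computed. Second, the reduction to ``a finite family of linear inequalities'' is itself unproved: the number of block-triples $(B_i,B_j,B_k)$ that can contribute to $r(3,F,n)$ grows with $n$, so one must actually establish that the piecewise-linear descriptions stabilize (e.g., that the relevant counting functions are $4$-regular and that a finite automaton computation certifies the inequality for all $n$) --- which is precisely the finite-verification work the paper carries out with Walnut and the queue-based reachability algorithm. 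Until that verification, together with the check of the finitely many small $n$, is actually performed, what you have is an equivalent and attractive reformulation of the theorem rather than a proof of it.
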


\begin{proof}
Here is an outline of the proof.  Define $A := \Enn\setminus F$  and
$d(n) = r(3,A,n) - r(3,A,n-1)$.
We will show that $d(n) > 0$ for all $n$. To do this,
we show 
\begin{equation}
d(n) \geq 4d(\lfloor n/4 \rfloor) - 18.
\label{ineq}
\end{equation}
and then use an easy induction.

To prove the bound \eqref{ineq},
we show that $f(n) := d(n) - 4d(\lfloor n/4 \rfloor)$ is an {\it automatic
sequence} \cite{Allouche&Shallit:2003},
meaning that there is a deterministic finite automaton
with output (DFAO) computing $f$, and we explicitly determine
the automaton.   Once we have the automaton for $f$, we can
determine the range of $f$ simply by examining the (finitely many)
outputs associated with the states.

To find the DFAO for $f$, we first observe that 
$A := \Enn \setminus F$ is an automatic set since $F$ is.  Then the claim that
$n$ is the sum of three elements of $A$ is first-order expressible,
and hence by a theorem of B\"uchi and Bruy\`ere \cite{Bruyere&Hansel&Michaux&Villemaire:1994}, the set 
$$G := \{ (n,i,j,k) \suchthat n = i+j+k \text{ for } i,j,k \in A \}$$
is also automatic.  The automaton for $G$ can be computed explicitly
by free software called {\tt Walnut} \cite{Mousavi:2016,Shallit:2022}, with the following commands:
\begin{verbatim}
morphism x "0->01 1->23 2->22 3->44 4->33":
morphism y "0->0 1->0 2->0 3->1 4->0":
promote X x:
image FF y X:
def g "FF[i]=@0 & FF[j]=@0 & FF[k]=@0 & n=i+j+k":
\end{verbatim}
The resulting automaton has $143$ states.

Next, we use the fact that the number $r(3,A,n)$
of triples $(i,j,k)$ corresponding
to a particular $n$ has a {\it linear representation\/} 
\cite{Berstel&Reutenauer:2011}.  This
means there exist a row vector $v$, a matrix-valued morphism $\gamma$, and
a column vector $w$, such that $r(3,A,n) = v \gamma(x) w$
for all binary strings $x$ evaluating to $n$ (when $x$ is considered as a 
number in base $2$).
The dimension of $v$ is called the {\it rank\/} of the linear
representation.
Furthermore, this linear representation is computable
from the automaton for $G$ by the {\tt Walnut} command
\begin{verbatim}
def r3an n "$g(i,j,k,n)":
\end{verbatim}

In the same way we can compute linear representations for
$r(3,A,n-1)$, $r(3, A, \lfloor n/4 \rfloor)$, and
$r(3,A, \lfloor n/4 \rfloor - 1)$ using the commands
\begin{verbatim}
def r3anm1 n "$g(i,j,k,n-1)":
def r3an4 n "$g(i,j,k,n/4)":
def r3an4m1 n "$g(i,j,k,n/4-1)":
\end{verbatim}
These have rank $143, 446, 446$ respectively.
From these four linear representations, using a simple construction involving block matrices,
we can compute a linear representation for
$$f(n) := d(n) - 4d(\lfloor n/4 \rfloor) = r(3,A,n) - r(3,A,n-1) -
4(r(3,A,\lfloor n/4\rfloor) - r(3,A, \lfloor n/4 \rfloor - 1)).$$
The resulting linear representation has rank $1178$.

Next, we can use an algorithm due to Sch\"utzenberger \cite[Chap.~2]{Berstel&Reutenauer:2011} to minimize this linear representation, resulting in a
linear representation $(v',\gamma', w')$ for $f$ of rank $16$.  We give it explicitly below:

\begin{align*}
&{v'}^T = \left[ \begin{smallarray}{c}
1\\
 0\\
 0\\
 0\\
 0\\
 0\\
 0\\
 0\\
 0\\
 0\\
 0\\
 0\\
 0\\
 0\\
 0\\
 0
\end{smallarray} \right] 
\quad
\gamma'(0)  = {1\over{276}} \left[ \begin{smallarray}{ccccccccccccccccccc}
276&    0&    0&    0&    0&    0&    0&    0&    0&    0&    0&    0&    0&    0&    0&    0\\
0&    0&  276&    0&    0&    0&    0&    0&    0&    0&    0&    0&    0&    0&    0&    0\\
0&    0&    0&    0&  276&    0&    0&    0&    0&    0&    0&    0&    0&    0&    0&    0\\
0&    0&    0&    0&    0&    0&  276&    0&    0&    0&    0&    0&    0&    0&    0&    0\\
0&    0&    0&    0&    0&    0&    0&    0&  276&    0&    0&    0&    0&    0&    0&    0\\
0&    0&    0&    0&    0&    0&    0&    0&    0&    0&  276&    0&    0&    0&    0&    0\\
0&    0&    0&    0&    0&    0&    0&    0&    0&    0&    0&    0&  276&    0&    0&    0\\
0&    0&    0&    0&    0&    0&    0&    0&    0&    0&    0&    0&    0&    0&  276&    0\\
0&    0&    0& -616& -348&  344&  156& -202&  156&  414&  208& -680&   -8&  692& -572& -634\\
0&    0&    0&  244&  -36& -288& -566&  405& -566&  161&   12& 1408& -280& -252&  542&  971\\
0&    0&    0&  276&  552&   92&   46&   23&   46& -253& -368&   92&  276& -460&  138&  253\\
0&    0&    0& -380& -324&  168&  334& -219&  334&  161&  384& -668&   56&  492& -274& -553\\
0&    0&    0&  448&  504&  168&  472&  126&  196& -322& -168& -392&   56& -336&  140&  -70\\
0&    0&    0& -344& -180&  216&  712& -252&  712&   92&   60&-1148&  164&  396& -556& -964\\
0&    0&    0& -132& -252&  -84&   -6&   75&   -6&  207&   84&   12& -120&  168& -162& -195\\
0&    0&    0&  364&  444&   56& -226&  203& -226& -161& -332&  360&   80& -388&   62&  429
\end{smallarray} \right] \\[10pt]
& \gamma'(1) = {1 \over {552}}  \left[\begin{smallarray}{cccccccccccccccccccc} 
0&  552&    0&    0&    0&    0&    0&    0&    0&    0&    0&    0&    0&    0&    0&    0\\
     0&    0&    0&  552&    0&    0&    0&    0&    0&    0&    0&    0&    0&    0&    0&    0\\
     0&    0&    0&    0&    0&  552&    0&    0&    0&    0&    0&    0&    0&    0&    0&    0\\
     0&    0&    0&    0&    0&    0&    0&  552&    0&    0&    0&    0&    0&    0&    0&    0\\
     0&    0&    0&    0&    0&    0&    0&    0&    0&  552&    0&    0&    0&    0&    0&    0\\
     0&    0&    0&    0&    0&    0&    0&    0&    0&    0&    0&  552&    0&    0&    0&    0\\
     0&    0&    0&    0&    0&    0&    0&    0&    0&    0&    0&    0&    0&  552&    0&    0\\
     0&    0&    0&    0&    0&    0&    0&    0&    0&    0&    0&    0&    0&    0&    0&  552\\
     0&    0&    0& -104& -600&  -16& -724&  218& -724&  322&  568& 1632& -496&   32&  692&  942\\
     0&    0&    0& -444& -396&  420&  306&  -99&  306&  345&  408&-1164&   48&  816& -294& -957\\
     0&    0&    0& -300&  180&  612&  714& -231&  714&   69& -336&-1428&  480&  432& -870& -945\\
     0&    0&    0&  616&   72& -528&-1076&  846&-1076&  230&  -24& 2704& -544& -600& 1124& 1922\\
     0&    0&    0&  324&  468&  156&  642&  393&  642& -483&  120& -732&  144& -312&  -54& -249\\
     0&    0&    0&  376&  216& -480& -284&  606& -284&  230&  -72& 1120& -160& -696&  428&  890\\
     0&    0&    0&  772&  420& -596& -726&  565& -726& -207& -232& 1820& -352&-1016&  914& 1291\\
     0&    0&    0& -980& -516&  748& 1302&-1049& 1302&  -69&  632&-2788&  464& 1264&-1186&-2351
\end{smallarray}\right]
\quad
w' = \left[ \begin{smallarray}{c}
-3\\
 -2\\
 -1\\
 -3\\
 -6\\
 -5\\
 -1\\
 -3\\
 -6\\
 -6\\
 -3\\
 -3\\
  3\\
  1\\
 -1\\
  3
\end{smallarray} 
\right] .
\end{align*}
Now a simple queue-based algorithm \cite[\S 4.11]{Shallit:2022}
establishes that the set $S$ of all vectors of the
form $v' \gamma'(x)$ for $x \in \{0,1\}^*$ is finite and is of cardinality $268$.
From this we easily construct a DFAO $M$ computing $f(n)$:  the
initial state is $v'$, the
state-set is $S$, and for $s\in S$ the transition $\delta(s,a)$
on the symbol $a\in \{0,1\}$ is given by $s \gamma'(a)$.
The output associated with a state $s$ is $s\cdot w'$.

By inspection of $M$, we see that the range of $f$ is 
$$\{ -18, -15, -14, -12, -11, -10, -9, -8, -7, -6, -5, -4, -3, -2, -1, 0, 1, 2, 3, 4, 5, 6, 7, 8, 9, 12, 13, 18
 \}.$$
Thus we have shown $f(n) = d(n) - 4d(\lfloor n/4 \rfloor) \geq -18$.

We now verify by induction on $n$ that $d(n) > n/5 + 7 $ for $n \geq 87$.
The base case is
$87 \leq n < 348$, and is easily checked.

Now assume $n \geq 348$ and that $d(n') > n'/5 + 7$ for $87 \leq n' < n$.
Then by induction
$$d(n) \geq 4d(\lfloor n/4 \rfloor) - 18
> 4(\lfloor n/4 \rfloor/5 + 7) - 18
\geq 4 ( (n/4 - 1)/5 + 7) - 18 > n/5 + 7,$$
as desired.

After checking that $d(n) > 0$ for $0 \leq n < 87$, 
it follows that $d(n) > 0$ for all $n$.
\end{proof}

\end{document}